\documentclass[12pt,a4paper,oneside]{amsart} 
\usepackage{amsmath, amssymb, amsthm,amsfonts,mathtools,microtype,cite}
\usepackage{xcolor}
\definecolor{linkblue}{RGB}{1,1,190}
\definecolor{citered}{RGB}{190,1,1}
\definecolor{recursionblue}{RGB}{70,130,180}

\usepackage[linkcolor=linkblue
,urlcolor=linkblue
,citecolor=citered
,colorlinks
,bookmarksopen=true,
]{hyperref}

\makeatletter
\AtBeginDocument{
	\hypersetup{
		pdftitle  = {\@title},
	}
}
\makeatother

\title[Null polynomials over a finite ring]{Null polynomials over a finite ring need not form a two-sided ideal}
\author{Valentin Havlovec}
\address{Institute of Analysis and Number Theory, Graz University of Technology,
	Kopernikusgasse 24, 8010 Graz, Austria}
\email{havlovec@math.tugraz.at}
\thanks{This research was funded in whole or in part by the Austrian Science Fund (FWF) [10.55776/P35788].}

\subjclass[2020]{Primary 16P10;	Secondary 13F20}
\keywords{null polynomial, finite ring, noncommutative polynomial
	evaluation, integer-valued polynomial}
\newcommand{\Ftwo}{\mathbb{F}_2}
\newcommand{\T}{\mathrm{T}}
\newcommand{\M}{\mathrm{M}}
\newcommand{\TtwoFtwo}{\T_2(\Ftwo)}
\newcommand{\MtwoFtwo}{\M_2(\Ftwo)}
\newcommand{\MfourFtwo}{{\M}_4(\Ftwo)}

\newcommand{\TnFtwo}{{\T}_n(\Ftwo)}
\newcommand{\ZZ}{\mathbb{Z}}

\DeclareMathOperator{\N}{N}

\newtheorem{thm}{Theorem}[section]
\theoremstyle{definition}
\newtheorem{rem}[thm]{Remark}

\begin{document}
	\begin{abstract}
		We give an example of a finite ring for which the set of polynomials inducing the zero function fails to be a two-sided ideal, disproving a conjecture of Werner.
	\end{abstract}
	\maketitle
	\section{Introduction}
	Let $R$ be a unital ring, and let $R[x]$ denote the polynomial ring over $R$ in a central indeterminate $x$. Let $\N(R) = \{f \in R[x] \mid f(r) = 0 \text{ for all } r \in R\}$ be the set of null polynomials with coefficients in $R$. Here we use the convention of right substitution, i.e., for $f = \sum_{i=0}^{n} a_i x^i$ and $r \in R$, we define $f(r) = \sum_{i=0}^{n} a_i r^i$.
	
	If $R$ is a commutative ring, then $\N(R)$ is easily seen to be an ideal of $R[x]$. However, if $R$ is noncommutative, whether this holds is not clear, since evaluation at $r$ fails to preserve multiplication in general. Nevertheless, the set $\N(R)$ is always a left ideal.
	Indeed, it is clearly an additive subgroup of $R[x]$. Moreover, if $g = \sum_j b_j x^j$ is an arbitrary polynomial and $f\in \N(R)$, then for all $r \in R$ we have $(gf)(r) = \sum_{j}b_j f(r)r^j =0$. Werner proved that $\N(R)$ is a two-sided ideal for a large class of rings, including local rings, semisimple rings, matrix rings over arbitrary commutative rings, and rings of odd order~\cite[Theorem 3.7]{Werner_PolynomialsThatKill}, and conjectured that this holds for all finite rings.
	
	Further evidence for this conjecture was given by Frisch, who showed that $\N(R)$ is a two-sided ideal when $R$ is the ring of upper triangular matrices over a commutative ring \cite[Theorem 5.2]{Frisch_PolFunOnUpperTriangularMatrixAlgebras}. More generally, the same conclusion holds when $R$ is a structural matrix ring \cite{Havlovec_IVPStructuralMatrixRings}.
	
	We disprove the conjecture by giving an example of a finite ring whose null polynomials do not form a two-sided ideal.

	\section{The counterexample}
	Throughout, $\mathrm M_n(\Ftwo)$ denotes the ring of $n\times n$ matrices over $\Ftwo$, and $\TnFtwo$ its subring of upper triangular matrices.
	\begin{thm}
		Let
		\[
		R = \left\{ \begin{pmatrix}
			A & B \\
			0 & A
		\end{pmatrix}  \mathrel{\Big|} A \in \TtwoFtwo \text{ and } B \in \MtwoFtwo \right\}.
		\]
		Then $R$ is a finite ring and $\N(R)$ is not a right ideal of $R[x]$. Consequently, $\N(R)$ is not a two-sided ideal of $R[x]$.
	\end{thm}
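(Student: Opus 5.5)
The plan is to exhibit an explicit null polynomial $f\in\N(R)$ and an explicit constant $c\in R$ such that $fc\notin\N(R)$. Since $\N(R)$ is always a left ideal, this shows that it is not a right ideal, and hence not two-sided. Finiteness of $R$ is immediate: $R$ is closed under addition and multiplication, contains the identity, and has $|\TtwoFtwo|\cdot|\MtwoFtwo| = 8\cdot 16 = 128$ elements.

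First I would rewrite $R$ in a form where evaluation is transparent. Identify $\begin{pmatrix} A & B\\ 0 & A\end{pmatrix}$ with the pair $(A,B)$, so that $(A,B)(A',B') = (AA', AB'+BA')$. Then
\[
(A,B)^i = \Bigl(A^i,\ \sum_{j=0}^{i-1} A^j B A^{i-1-j}\Bigr).
\]
For $f=\sum_i (a_i,b_i)x^i$ this gives
\[
f(A,B) = \Bigl(\sum_i a_iA^i,\ \sum_i b_iA^i + \sum_i a_i D_i(A,B)\Bigr), \qquad D_i(A,B)=\sum_{j=0}^{i-1}A^jBA^{i-1-j}.
\]
Thus $f\in\N(R)$ holds exactly when two conditions hold for all $A\in\TtwoFtwo$ and $B\in\MtwoFtwo$: the polynomial $\sum a_ix^i$ is null on $\TtwoFtwo$, and the second component vanishes.

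For $c=(C,E)$, the product $fc$ has coefficients $(a_iC,\ a_iE+b_iC)$. By Frisch's theorem, null polynomials over $\TtwoFtwo$ form a two-sided ideal, so the first component of $fc$ is automatically null. The failure must therefore come from the second component,
\[
\sum_i (a_iE+b_iC)A^i + \sum_i a_iC\,D_i(A,B).
\]
The mechanism to exploit is that $B$ ranges over all of $\MtwoFtwo$, not just upper triangular matrices. The derivative-like term $D_i(A,B)$ therefore probes $A$ from both sides with a non-triangular $B$ in between. Inserting a non-central $C$ (I would try a matrix unit such as $C=e_{11}$ or $e_{12}$, with $E=0$) can break a cancellation that held for $\sum_i a_iD_i$.

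Concretely, I would search among low-degree candidates built from the standard null polynomials of $\TtwoFtwo$, for instance multiples of $(x^2-x)^2$ or $(x^4-x^2)$-type expressions. I would adjust them by left-multiplying coefficients by idempotents of $\TtwoFtwo$ and by adding suitable $b_i$ to cancel the $D_i$-part. Since all polynomials over the finite ring $R$ can be reduced modulo a fixed monic null polynomial of $R$, the search is over a finite set, and everything can be checked on the 128 elements.

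The main obstacle is finding $f$. One must arrange $\sum_i a_iD_i(A,B)+\sum_i b_iA^i=0$ identically, while $\sum_i a_iC\,D_i(A,B)+\sum_i b_iCA^i$ does not vanish for some $(A,B)$. The first thing I would try is to split $A$ into its diagonal idempotent types and its nilpotent part, and to compute $D_i$ explicitly in each case, which gives linear conditions on the $a_i,b_i$. Once a candidate is found, the verification is finite. For $f\in\N(R)$, I would check the two component identities separately, using that they are linear in $B$, so that it suffices to test $B$ among the matrix units. For $fc$, I would display a single witness $r=(A,B)$ with $(fc)(r)\neq 0$.
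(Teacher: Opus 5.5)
Your setup is sound. The pair description $(A,B)(A',B')=(AA',AB'+BA')$, the formula for $(A,B)^i$, the use of Frisch's theorem to see that the first component of $fc$ is automatically null, and the count $|R|=128$ are all correct. The reduction to a finite search also works, provided the monic null polynomial you divide by has central coefficients, e.g.\ $x^8+x^4$, so that reducing $f$ commutes with right multiplication by $c$.

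However, the theorem is an existence statement, and you never supply the witness. There is no explicit $f\in\N(R)$, no $c\in R$, and no $r$ with $(fc)(r)\neq 0$. Describing a search is not a proof unless the search is carried out and its output verified. You cannot take success for granted here: the conjecture being refuted predicted the opposite, and for large classes of finite rings Werner and Frisch proved that no such witness exists. As written, your argument establishes only that $R$ is a finite ring.

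The missing ingredient, supplied in the paper, is an explicit $f$ whose nullity can be checked by hand. Set $E=e_{11}$, $G=e_{12}$, $P=(E,0)$, $Q=(G,0)$, and take $f=(Q+Px+Px^2)(x^2+x^4)$; all your $b_i$ are $0$.

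\emph{Why $f$ is null.} Because $w=x^2+x^4$ has central coefficients, $f(r)=v(r)w(r)$ with $v=Q+Px+Px^2$. Every $A\in\TtwoFtwo$ falls into one of two cases:
\begin{itemize}
\item $A$ is idempotent. Then $r^4=r^2$, so $w(r)=0$.
\item $A=\varepsilon I_2+G$. Then $w(r)=(0,AB+BA)$ lies in the square-zero ideal, so $f(r)=(0,v_1(AB+BA))$ with $v_1=G+EA+EA^2=G+EG=0$.
\end{itemize}

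\emph{Why $fP$ is not null.} From $QP=0$ and $P^2=P$ you get $fP=P(x^3+x^4+x^5+x^6)$. Take $J=(G,e_{21})$; then $J^2=(0,I_2)$, $J^3=(0,G)$ and $J^4=0$. Hence $(fP)(J)=PJ^3=(0,G)\neq 0$.

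Your heuristics point exactly at this example: an $(x^4-x^2)$-type factor, idempotent left coefficients, $c=(e_{11},0)$, and a non-triangular $B$. Indeed, for this $f$, $(fP)(r)\neq 0$ forces $B$ to have a nonzero $(2,1)$-entry. But the proof consists precisely of producing the example and carrying out these two verifications.
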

	
	\begin{proof}
		The set $R$ is an additive subgroup of $\MfourFtwo$ and contains the identity matrix. Moreover, 
		\[\begin{pmatrix}
			A & B\\
			0 & A
		\end{pmatrix}
		\begin{pmatrix}
			C & D\\
			0 & C
		\end{pmatrix} = 
		\begin{pmatrix}
			AC & AD + BC\\
			0 & AC
		\end{pmatrix} \in R.
		\] Thus
		$R$ is a unital subring of $\MfourFtwo$. Equivalently, $R$ is the trivial extension of $\TtwoFtwo$ by the bimodule $\MtwoFtwo$.
		
		Let $E = \begin{psmallmatrix}
			1&0\\
			0&0
		\end{psmallmatrix}\in \TtwoFtwo$, and $G = \begin{psmallmatrix}
			0&1\\
			0&0
		\end{psmallmatrix}\in \TtwoFtwo$. Further, let $P=\begin{psmallmatrix}
			E&0\\
			0&E
		\end{psmallmatrix}\in R$, and $Q = \begin{psmallmatrix}
			G&0\\
			0&G
		\end{psmallmatrix}\in R$.
		We claim that the polynomial
		\[
		f = Qx^2 + Px^3 + (P+Q)x^4 + Px^5 + Px^6
		\]
		satisfies $f \in \N(R)$ but $fP \notin \N(R)$.
		
		We first show that \(f\in\N(R)\). Let $r\in R$ be arbitrary and observe that
		\[
		f = (Q + Px + Px^2)(x^2 + x^4).
		\]		
		Let $v = Q + Px + Px^2$ and $w = x^2 + x^4$.
		Although evaluation is not multiplicative in general, in this instance we do have $f(r) = v(r)w(r)$ since the coefficients of $w$ are central.		
		Now let 
		\[r=
		\begin{pmatrix}
			A&B\\
			0&A
		\end{pmatrix}
		\in R,
		\qquad
		A=
		\begin{pmatrix}
			a&b\\
			0&c
		\end{pmatrix}.
		\]
		Then
		\[
		r^2=
		\begin{pmatrix}
			A^2&AB+BA\\
			0&A^2
		\end{pmatrix}.
		\]
		Since \(a,b,c\in\Ftwo\), we have
		\[
		A^2=
		\begin{pmatrix}
			a&b(a+c)\\
			0&c
		\end{pmatrix}.
		\]
		Thus
		\[
		A^2=A
		\quad\Longleftrightarrow\quad
		b=0\ \text{or}\ a\neq c.
		\]
		Consequently, either \(A^2=A\), or \(b=1\) and \(a=c\). In the latter
		case,
		\[
		A=\varepsilon I_2+G
		\qquad\text{for some }\varepsilon\in\Ftwo,
		\]
		where $I_2$ denotes the $2 \times 2$ identity matrix.
		
		\begin{samepage}
		Suppose first that \(A^2=A\). In this case,
		\[
			r^2 = \begin{pmatrix}
				A&AB+BA\\
				0&A
			\end{pmatrix},
		\]		
		and
		\begin{align*}	
			r^4&=\begin{pmatrix}
				A&AB+ABA+ABA+BA\\
				0&A
			\end{pmatrix}\\
			&=\begin{pmatrix}
				A&AB+BA\\
				0&A
			\end{pmatrix}\\
			&=r^2,
		\end{align*}
		where we have used that the characteristic is~$2$. 
		Thus $w(r) = r^2+r^4=0$, which implies $f(r)=0$.
	\end{samepage}
	
		Now suppose that $A=\varepsilon I_2+G$. Since $G^2=0$, we have $A^2=\varepsilon I_2$ and hence
		\[
		r^2=
		\begin{pmatrix}
			\varepsilon I_2&AB+BA\\
			0&\varepsilon I_2
		\end{pmatrix},
		\quad\text{and}\quad
		r^4=
		\begin{pmatrix}
			\varepsilon I_2&0\\
			0&\varepsilon I_2
		\end{pmatrix}.
		\]
		Consequently,
		\[
		w(r)=r^2+r^4=
		\begin{pmatrix}
			0&AB+BA\\
			0&0
		\end{pmatrix}.
		\]
		Recall that because $w$ has central coefficients,
		\[
		f(r)=Qw(r)+Prw(r)+Pr^2w(r).
		\]
		Setting $C:=AB+BA$, a block-matrix multiplication gives
		\[
		f(r)
		=
		\begin{pmatrix}
			0&GC+EAC+\varepsilon EC\\
			0&0
		\end{pmatrix}
		=
		\begin{pmatrix}
			0&GC+EGC\\
			0&0
		\end{pmatrix},
		\]
		because $A=\varepsilon I_2+G$. Finally, since $EG=G$, the upper-right block is
		\[
		GC+EGC=GC+GC=0.
		\]
		Thus $f(r)=0$ also in this case. Since $r\in R$ was arbitrary, we conclude that $f\in\N(R)$.
		
		It remains to show that $fP \notin \N(R)$. Using $P^2 = P$ and $QP = 0$, we obtain $fP = Px^3+Px^4+Px^5+Px^6$. Now let $J = \begin{psmallmatrix}
			0&1&0&0\\
			0&0&1&0\\
			0&0&0&1\\
			0&0&0&0
		\end{psmallmatrix} \in R$. Then $J^3 = \begin{psmallmatrix}0&0&0&1\\0&0&0&0\\0&0&0&0\\0&0&0&0\end{psmallmatrix}$, while all higher powers vanish. Hence
		\[
		(fP)(J) = \begin{pmatrix}
			1&0&0&0\\
			0&0&0&0\\
			0&0&1&0\\
			0&0&0&0
		\end{pmatrix}\begin{pmatrix}
			0&0&0&1\\
			0&0&0&0\\
			0&0&0&0\\
			0&0&0&0
		\end{pmatrix} = \begin{pmatrix}
			0&0&0&1\\
			0&0&0&0\\
			0&0&0&0\\
			0&0&0&0
		\end{pmatrix} \neq 0.
		\]
		This shows that $\N(R)$ is not closed under right multiplication by $P$, so in particular $\N(R)$ is not a right ideal of $R[x]$.
	\end{proof}
	
	\begin{rem}
		Although $R$ has a prescribed zero lower-left block, it is not a
		structural matrix ring: membership in $R$ also requires its two
		diagonal blocks to be equal, a condition that cannot be expressed
		solely by prescribing certain entries to be zero. Hence the known
		results for structural matrix rings do not apply here.
	\end{rem}
	
	\begin{rem}
		This example can also be used to construct a finitely generated torsion-free $\mathbb{Z}$-algebra for which the set of integer-valued polynomials does not form a ring. This provides the exampled requested in \cite[Problem 27a]{CahenFontanaFrischGlaz_OpenProblems}, and disproves the conjecture in \cite[Problem 27b]{CahenFontanaFrischGlaz_OpenProblems}.
		
		Indeed, replace $\Ftwo$ with $\ZZ$ in the definition of the ring $R$ to obtain the module-finite, free $\ZZ$-algebra $S$. Then $R \cong S/2S$. By \cite[Theorem 2.4]{Werner_PolynomialsThatKill}, the failure of $\N(R)$ to be a two-sided ideal implies that the integer-valued polynomials over $S$ do not form a ring.
	\end{rem}
	
	\section*{Declaration on the use of LLMs}
	The counterexample and an initial proof were generated by GPT-5.6 Sol. The author subsequently verified the construction independently and simplified the proof. The author takes full responsibility for the mathematical content of the article.

	\bibliographystyle{amsplain-doi}
	\bibliography{bibliography}
\end{document}